\documentclass[12pt]{amsart}
\usepackage{amssymb, amsmath, amsthm} 
\usepackage{url}      
\usepackage{mathtools} 
\usepackage{tikz-cd}
\usepackage[T1]{fontenc}
\usepackage[utf8]{inputenc}
\usepackage{xcolor}
\usepackage{mathrsfs}
\usepackage{stmaryrd}
\usepackage{eucal}
\usepackage[all]{xy}
\usepackage[margin=1in]{geometry} 
\usepackage{hyperref}
\usepackage[dvipsnames]{xcolor}
\usepackage[shortlabels]{enumitem}
\hypersetup{bookmarksdepth=2}
\hypersetup{colorlinks=true}
\hypersetup{linkcolor=blue}
\hypersetup{citecolor=blue}
\hypersetup{urlcolor=blue}
\usepackage[style=alphabetic]{biblatex} 
\allowdisplaybreaks

\theoremstyle{plain}
\newtheorem{theorem}{Theorem}[section] 
\newtheorem{proposition}[theorem]{Proposition} 
\newtheorem{lemma}[theorem]{Lemma}

\theoremstyle{definition}
\newtheorem{definition}[theorem]{Definition}

\theoremstyle{remark}

\newtheorem*{acknowledgement}{Acknowledgements}

\renewcommand{\t}{\tau}

\newcommand{\FF}{\mathbb{F}}

\newcommand{\QQ}{\mathbb{Q}}

\newcommand{\ZZ}{\mathbb{Z}}

\newcommand{\End}{\operatorname{End}}

\newcommand{\GL}{\operatorname{GL}}

\newcommand{\grand}{{\textup{grand}}} 
\newcommand{\grano}{\mathcal{O}^{\grand}}

\newcommand{\id}{\operatorname{id}}

\newcommand{\Qbar}{{\bar{\QQ}}}

\renewcommand{\setminus}{\smallsetminus}

\title{Orbit Transversality for Abelian Varieties}

\author{Kaiwen Lu}
\address{Department of Mathematics, Brown University, Providence, RI, USA}
\email{kaiwen\_lu@brown.edu}

\begin{document}

\begin{abstract}
Let $X/K$ be an abelian variety defined over a number field and let $f:X\to X$ be a dominant morphism defined over $K$.
We show that $(f,K)$ is strongly dense orbit transversal. 
That is, up to replacing $K$ with a finite extension, every set of representatives for grand $(f,K)$-orbits is Zariski dense in $X$.

\end{abstract}

\maketitle

\section{Introduction}

Let $X$ be a variety over a field $K$ and let $f:X\to X$ be a morphism defined over $K$.
A central object of study in dynamics is the set of grand $(f,K)$-orbits of points in $X(K)$, as they encode the information of how points move under $f$.
\begin{definition}
    The \emph{grand $(f,K)$-orbit} of a point $P\in X(K)$ is the set of points whose orbits eventually merge with the orbit of $P$:
    \[\grano_{f,K}(P):=\{Q\in X(K)\mid  f^n(Q)=f^m(P) \text{ for some $n,m \ge 0$}\}.\]
\end{definition}
A natural question to ask is then: how are the grand $(f,K)$-orbits distributed in $X$?
We know that the set of grand $(f,K)$-orbits in $X(K)$ partitions $X(K)$, so we need to pin down a notion of distribution before we proceed.
To precisely discuss this problem, Pasten and Silverman \cite{pastensilverman} introduced orbit propagation principles, which measure the distribution of grand orbits by the Zariski density of some sets of representatives.
Using terminologies introduced by Silverman, one set of representatives considered is the following:
\begin{definition}
An \emph{$(f,K)$-transversal of $X$} is a subset $S\subseteq X(K)$ such that every $(f,K)$-grand orbit of $X(K)$ contains exactly one point in $S$.
\end{definition}
With the above definition, orbit propagation principles (C2$\exists$) and (C2$\forall$) described in \cite{pastensilverman} can be rephrased as follows:
\begin{definition}[Weak DOT]
We say that $(f,K)$ is \emph{weakly dense orbit transversal} (W-DOT) if for some finite extension $L/K$, \textbf{there exists} a Zariski dense $(f,L)$-transversal of $X$.
\end{definition}

\begin{definition}[Strong DOT]
We say that $(f,K)$ is \emph{strongly dense orbit transversal} (S-DOT) if for some finite extension $L/K$, \textbf{every} $(f,L)$-transversal of $X$ is Zariski dense.
\end{definition}

Pasten and Silverman proved orbit propagation principles of various strength for many classes of self-maps on families of projective varieties.
In the case of abelian varieties, they proved strongly dense orbit transversality for simple abelian varieties:
\begin{theorem}[\cite{pastensilverman} Theorem 6.1, rephrased]
Let $X/\Qbar$ be a geometrically simple abelian variety, and let $f:X\to X$ be an endomorphism of $X$ (as an abstract variety) such that there is a point in $X(\Qbar)$ with $f$-orbit that is Zariski dense in $X$.
Then there is a number field~$K/\QQ$ such that~$X$ and~$f$ are defined over~$K$ and $(f,K)$ is strongly dense orbit transversal.
\end{theorem}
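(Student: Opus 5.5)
The plan is to argue by contradiction. Suppose that for every finite extension $L/K$ there is an $(f,L)$-transversal $S$ that is not Zariski dense. I will show that the grand orbits are then too ``spread out'' in one direction to be covered by a proper closed set. Write $f=\tau_c\circ\phi$, where $\phi\in\End(X)$ and $\tau_c$ is translation by $c\in X(K)$. Since there is a point $P_0\in X(\Qbar)$ whose $f$-orbit is Zariski dense, $f$ is dominant, so $\phi$ is an isogeny. The density hypothesis also constrains $\phi$. Because $X$ is geometrically simple, $\End(X)\otimes\QQ$ is a division algebra, so $\phi-1$ is either $0$ or an isogeny. If $\phi-1$ is an isogeny, then after a finite extension we can conjugate $f$ by a translation to make $c=0$. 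In that case the orbit of $P_0$ lies in the finitely generated group $\ZZ[\phi]P_0$.

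Next I use the Zariski-dense orbit together with the Mordell--Weil theorem and Faltings' theorem (the Mordell--Lang conjecture) to control the closed set $Z=\overline{S}\subsetneq X$. By Faltings, for any finitely generated subgroup $\Gamma\subseteq X(\Qbar)$, the set $Z\cap\Gamma$ is a finite union of cosets $\gamma_i+B_i\cap\Gamma$ with $B_i\subsetneq X$ abelian subvarieties. Simplicity forces $B_i=0$, so $Z\cap\Gamma$ is finite. Take $L\supseteq K$ so that $P_0\in X(L)$, and let $\Gamma$ be the $\ZZ[\phi]$-module generated by $P_0$ and $c$, which is finitely generated. The grand orbit of $f^n(P_0)$ meets $S$ in a single point $Q_n$. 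I then want to show that $Q_n$ can be chosen in, or moved into, a finitely generated group $\Gamma'$ independent of $n$, while remaining pairwise distinct for infinitely many $n$. That would give infinitely many points of $Z\cap\Gamma'$, a contradiction.

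The main obstacle is that $Q_n$ may be a preimage $Q$ with $f^a(Q)=f^b(P_0)$. Such points have bounded-degree fields of definition only because they are $L$-rational, and they need not lie in $\Gamma$. To handle this I use that $\Gamma'=\{Q\in X(L): mQ\in\Gamma \text{ for some } m\}$, the division hull, is still finitely generated by Mordell--Weil, since it sits inside $X(L)$. It is also closed under taking $L$-rational $f$-preimages, because $\phi$ is an isogeny, so $\deg\phi\cdot\phi^{-1}$ is an endomorphism. Hence every grand orbit of a point of $\Gamma'$ stays inside $\Gamma'$.

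It remains to see that infinitely many of the points $f^n(P_0)$ lie in distinct grand orbits. If $f^n(P_0)$ and $f^{n'}(P_0)$ with $n\neq n'$ are in the same grand orbit, then some forward iterate of $P_0$ is preperiodic, so the orbit of $P_0$ is finite, contradicting density. Thus the points $f^n(P_0)$ give infinitely many distinct grand orbits, each with its representative in $S\cap\Gamma'\subseteq Z\cap\Gamma'$. This contradicts the finiteness obtained from Faltings.

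In the case $\phi=1$, $f$ is translation by a non-torsion point $c$, and the same argument applies with $\Gamma'$ the division hull of $\langle P_0,c\rangle$ in $X(L)$. I will also need to check that the extension $L$ is chosen before $S$, which the argument above does respect.
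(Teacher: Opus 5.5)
Your argument fails at the step where you assert that the points $f^n(P_0)$ lie in infinitely many distinct grand orbits. By definition, $Q\in\grano_{f,L}(P)$ as soon as $f^k(Q)=f^m(P)$ for \emph{some} $k,m\ge 0$. For $Q=f^n(P_0)$ and $P=f^{n'}(P_0)$ you can take $k=n'$, $m=n$. So the entire forward orbit of $P_0$ is a single grand orbit and contributes exactly one point to $S$. Your preperiodicity deduction tacitly assumed $a+n\neq b+n'$.

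What your Faltings step does show is that, for simple $X$, every non-dense subset of $X(L)$ is finite. For this you can take $\Gamma=X(L)$ itself, which is finitely generated by Mordell--Weil. The division hull, the conjugation to $c=0$ and the dichotomy on $\phi-1$ are therefore unnecessary. The theorem thus reduces to showing that, for suitable $L$, $X(L)$ contains infinitely many grand $(f,L)$-orbits, and your proposal never proves this.

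It is not automatic for an arbitrary $L\ni P_0$. Let $E/\QQ$ have $E(\QQ)=\ZZ c$ with $c$ of infinite order, and let $f(x)=x+c$. Then $P_0=c$ has a Zariski-dense orbit, yet $E(\QQ)$ is one grand orbit and $\{0\}$ is a non-dense transversal. Your argument would run verbatim with $L=\QQ$ here, so it proves something false.

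The two missing ingredients are the paper's Lemmas~\ref{corank} and~\ref{orbitavoid}.

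\begin{enumerate}
\item First you must enlarge $L$ to force the rank $R$ of $X(L)$ up. For simple $X$, $R\ge 2$ suffices.
\item Then you need a mechanism that produces a grand orbit avoiding a given finite set. Pass to the free quotient $X(L)/X(L)_{\mathrm{tors}}\cong\ZZ^R$. Homogenize $f$ to an integer matrix $H$ with $\det H\neq 0$, which holds by dominance. Choose a prime $p\nmid\det H$, and let $h$ be the order of $H$ modulo $p$. Then $H^{hp^m}\equiv\id\pmod{p^m}$, so every grand orbit of $f$ modulo $p^m$ has at most $hp^m$ elements.
\item Suppose $V(L)$ reduces to $N$ points. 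Their grand orbits cover at most $Nhp^m<p^{mR}$ residue classes once $p^m>Nh$ and $R\ge 2$. Any lift of an uncovered class has grand orbit avoiding $V(L)$.
\end{enumerate}

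Lemma~\ref{equiv} then converts this into strong dense orbit transversality.
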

The main result of this paper is strongly dense orbit transversality for dominant endomorphisms of arbitrary abelian varieties.
\begin{theorem}\label{main}
Let $X/\Qbar$ be an abelian variety, and let $f:X\to X$ be a dominant morphism.
Then there is a number field~$K/\QQ$ such that~$X$ and~$f$ are defined over~$K$, and $(f,K)$ is strongly dense orbit transversal.
\end{theorem}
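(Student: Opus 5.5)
The plan is to translate transversal density into a statement about the finitely generated group $X(L)$ and then apply the Mordell--Lang theorem (Faltings). First observe that $(f,L)$ fails to be S-DOT exactly when there is a proper Zariski closed $Z\subsetneq X$ such that every grand $(f,L)$-orbit meets $Z(L)$. Indeed, if such a $Z$ exists, we can pick one point of $Z(L)$ in each grand orbit and get a non-dense transversal. Conversely, a non-dense transversal lies in some proper $Z$. So it suffices to find $L$ such that for every proper closed $Z$ there is a point $P\in X(L)$ whose grand orbit avoids $Z$.

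By rigidity, write $f=\tau_a\circ\varphi$ with $\varphi$ a surjective homomorphism, hence an isogeny since $f$ is dominant, and $a\in X(K)$. Enlarge $K$ so that $X$, $\varphi$ and $a$ are defined over $K$. Then, using Frey--Jarden type results, pass to a finite extension $L$ so that $\Gamma:=X(L)$ is Zariski dense in every abelian subvariety of $X$. In particular $\Gamma\otimes\QQ$ meets each simple isogeny factor in positive rank.

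Next, for a proper closed $Z$, Faltings gives
\[
Z(L)=\bigcup_{i=1}^r \bigl(y_i+B_i(L)\bigr)
\]
with each $B_i\subsetneq X$ an abelian subvariety. A point $Q$ lies in the grand orbit of $P$ iff
\[
\varphi^n(Q)+c_n=\varphi^m(P)+c_m \quad\text{for some } n,m\ge 0,\qquad c_k=\sum_{j<k}\varphi^j(a).
\]
Hence the grand orbit of $P$ meets $y_i+B_i$ iff
\[
\varphi^m(P)\in \varphi^n(y_i)+\varphi^n(B_i)+c_n-c_m .
\]
Since $\varphi$ is an isogeny, $\varphi^n(B_i)$ is again a proper abelian subvariety of the same dimension. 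So the set of "bad" $P$ is a countable union over $(n,m,i)$ of cosets $\varphi^{-m}\bigl(\text{coset of }\varphi^n(B_i)(L)\bigr)$.

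The main obstacle is this countable union, because $\Gamma$ is itself countable and a naive Baire or measure argument is unavailable. I would work in the finite-dimensional $\QQ[\varphi]$-module $V=\Gamma\otimes\QQ$, on which $\varphi$ acts invertibly. Only finitely many abelian subvarieties $\varphi^n(B_i)$ occur up to the action of the finite-order part: abelian subvarieties of bounded degree form finitely many families, and $\varphi$ permutes those attached to a fixed isogeny type. The goal is to show that the subspaces $W_{n,i}=\varphi^{n}(B_i)(L)\otimes\QQ$ range over finitely many proper subspaces. If this fails, I would use canonical heights instead: choose $P$ in a direction of $V$ whose $\hat h$-growth under $\varphi$ is strictly larger than along any $W_{n,i}$, so that the relation above forces $n,m$ to be bounded. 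Either way only finitely many proper cosets remain, each of infinite index by the density of $\Gamma$. A finitely generated group of positive rank in each factor is not a finite union of such cosets (torsion and the translates $c_n$ are handled by multiplying $P$ by a large integer). So some $P$ avoids them all, which proves Theorem~\ref{main}.
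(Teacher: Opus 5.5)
There is a genuine gap, and it sits exactly at the step you flag as the main obstacle. The parts before it match the paper: your reduction to finding, for each proper closed $Z$, a point whose grand orbit avoids $Z(L)$, the rigidity decomposition, and the use of Faltings's theorem. But the bad set $\bigcup_{n,m,i} f^{-m}\bigl(f^{n}(y_i+B_i(L))\bigr)$ is genuinely an infinite union of cosets, and neither of your mechanisms makes it finite.

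\begin{enumerate}[(i)]
\item \emph{The subvarieties $\varphi^n(B_i)$ need not be finitely many.} On $E\times E$ with $E$ non-CM, take $\varphi(x,y)=(x+y,y)$ and $B=\{0\}\times E$. The curves $\varphi^n(B)=\{(ny,y)\}$ are pairwise distinct and of unbounded degree.
\item \emph{The height fallback fails in the same example.} Here $\hat h(\varphi^n(0,y))$ grows like $n^2\hat h(y)$, which is the maximal rate, so no direction outgrows $B$. For $\varphi=\id$ there is no growth to exploit at all.
\item \emph{The translates already give infinitely many cosets.} Even when $\varphi=\id$, the terms $c_n-c_m=(n-m)a$ produce the cosets $y_i+B_i(L)+ka$ for all $k\in\ZZ$. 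Multiplying $P$ by an integer does not escape them.
\end{enumerate}

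In fact the only hypothesis your argument places on $L$ (positive rank in each simple factor, i.e.\ corank $\ge 1$) is too weak for the conclusion. Suppose $E$ is non-CM, $E(L)$ has rank one with generator $g$ modulo torsion, and $f=\tau_{(0,g)}$ on $E\times E$. Then $X(L)$ is Zariski dense in every abelian subvariety. Yet every grand orbit $\{(x,y+kg):k\in\ZZ\}$ meets $E\times E(L)_{\mathrm{tors}}$. So $E(L)\times E(L)_{\mathrm{tors}}$ is a non-dense transversal.

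Two ingredients are missing. First, you need corank at least $2$ for every proper abelian subvariety, which is Lemma \ref{corank} with $\rho=2$. Second, you need a way to control all pairs $(n,m)$ at once without making the union finite. The paper gets this from a counting argument modulo $p^m$ (Lemma \ref{orbitavoid}):
\begin{enumerate}[(a)]
\item After killing torsion, $X(L)/X(L)_{\mathrm{tors}}\cong\ZZ^R$ and $f$ is affine with $\det\varphi\neq 0$.
\item The homogenized matrix $H=\left(\begin{smallmatrix}\varphi & a\\ 0 & 1\end{smallmatrix}\right)$ satisfies $H^h\equiv\id\pmod p$ for a prime $p\nmid\det H$. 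Hence $H^{hp^m}\equiv\id\pmod{p^m}$.
\item So $\overline{f}$ is a bijection of $(\ZZ/p^m\ZZ)^R$ whose grand orbits all have at most $hp^m$ elements.
\item Each coset $y_i+B_i(L)$ of corank $\ge 2$ reduces to at most $p^{m(R-2)}$ classes.
\item Therefore the grand orbits of $\overline{V}$ cover at most $Nh\,p^{m(R-1)}<p^{mR}$ classes once $p^m>Nh$.
\item Any lift of an uncovered class has grand orbit disjoint from $V$, since reduction commutes with $f$.
\end{enumerate}
The extra codimension is exactly what absorbs the factor $hp^m$ coming from orbit length. Your proposal has no substitute for this step.
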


Theorem \ref{main} strengthens and simplifies the proof of W-DOT for abelian varieties in the author's earlier unpublished paper \cite{wdot}.

As another motivation for the defintion of $(f,K)$-transversals, Pasten and Silverman proved that over a number field $K$ and up to replacing $K$ with a finite extension, if $X(K)$ is Zariski dense in $X$, then the existence of a dense $(f,K)$-transversal is equivalent to $X(K)\setminus(\Gamma_1\cup\cdots \cup \Gamma_r)$ being Zariski dense for every finite collection of grand $(f,K)$-orbits $\Gamma_1,\ldots,\Gamma_r$.
This gives more evidence for why the Zariski density of an $(f,K)$-transversal is a reasonable notion for measuring the distribution of grand $(f,K)$-orbits.

One thing to note is that their proof for the aforementioned equivalence crucially relies on the fact that $K$ is countable. 
In fact, if $K$ is uncountable, then we will prove that strongly dense orbit transversality always holds for dominant morphisms of irreducible varieties; see Proposition \ref{uncountablefield}.

Henceforth we assume that $K$ is a number field, and we may abuse notation when replacing $K$ with a finite extension and still call the extension $K$.

\begin{acknowledgement}
    I would like to thank my advisor Joseph Silverman for many helpful discussions and comments.
\end{acknowledgement}

\section{Proof of the main theorem}
Trying to work with an arbitrary transversal is difficult, so we turn the proof of the main theorem into an existence proof using the following lemma.

\begin{lemma}\label{equiv}
    Let $K$ be a field, $X/K$ a variety, and $f:X\to X$ a morphism.
    Then $(f,K)$ is strongly dense orbit transversal if and only if there exists a finite extension $L/K$ such that for every proper subvariety $V\subsetneq X$, there exists a point $P\in X(L)$ such that
    \[V\cap\grano_{f,L}(P)=\varnothing.\]
\end{lemma}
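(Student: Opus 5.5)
Both directions will be proved with the same finite extension $L$. The key fact is that a subset $S\subseteq X(L)$ fails to be Zariski dense exactly when it lies in some proper closed subset $V\subsetneq X$. We take ``proper subvariety'' to mean a proper Zariski closed subset. If the statement is read with irreducible $V$, then for irreducible $X$ a proper closed set is a finite union of proper irreducible ones. In that case we apply the condition to the union, or else reinterpret the lemma with closed sets; we flag this as a point to check against the paper's conventions. We also use that the grand $(f,L)$-orbits partition $X(L)$, which follows because the relation defining $\grano_{f,L}$ is an equivalence relation.

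($\Leftarrow$) Fix $L$ as in the hypothesis and let $S$ be any $(f,L)$-transversal. Suppose $S$ is not Zariski dense, so $S\subseteq V$ for some proper closed $V\subsetneq X$. By hypothesis there is $P\in X(L)$ with $V\cap\grano_{f,L}(P)=\varnothing$. Since $S$ is a transversal, $S$ meets $\grano_{f,L}(P)$ in exactly one point $Q$. But $Q\in S\subseteq V$, which contradicts the disjointness. Hence every $(f,L)$-transversal is dense, and $(f,K)$ is S-DOT.

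($\Rightarrow$) Take $L$ from the definition of S-DOT, so that every $(f,L)$-transversal is dense. Suppose, for contradiction, that some proper closed $V\subsetneq X$ meets every grand $(f,L)$-orbit. Using the axiom of choice, pick one point of $V$ in each grand orbit. Because the orbits are disjoint, this gives a set $S\subseteq V(L)$ that contains exactly one point from each grand orbit, so $S$ is an $(f,L)$-transversal. Then $S\subseteq V$ is not Zariski dense, a contradiction. So for every proper $V$ there is a $P$ with $V\cap\grano_{f,L}(P)=\varnothing$.

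Neither direction has a genuine obstacle. The only care needed is the closed-versus-irreducible reading of ``proper subvariety'', together with checking that the orbits really are disjoint. For disjointness: if $f^n(Q)=f^m(P)$ and $f^a(R)=f^b(Q)$, then applying $f$ suitably gives $f^{a+n}(R)=f^{b+n}(Q)=f^{b+m}(P)$, which yields transitivity.
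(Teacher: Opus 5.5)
Your proof is correct and follows the paper's argument in both directions. For $(\Leftarrow)$, a non-dense transversal lies in a proper closed set, and that set must miss some grand orbit; for $(\Rightarrow)$, a proper closed set meeting every grand orbit would contain a transversal chosen by choice. Your forward direction is slightly cleaner than the paper's: you keep the single $L$ supplied by S-DOT for every $V$, rather than enlarging $L$ to make $V(L)\neq\varnothing$, which matches the quantifier order in the statement. Your reading of ``proper subvariety'' as a proper closed subset is also the one the paper relies on, since it applies the hypothesis to $\overline{S}$.
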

\begin{proof}
    Assume that $(f,K)$ is strongly dense orbit transversal.
    Let $V\subsetneq X$ be a proper subvariety.
    Let $L/K$ be a finite extension such that every $(f,L)$-transversal is Zariski dense and $V(L)\neq \varnothing$.
    If for every $P\in X(L)$, we have 
    \[V\cap\grano_{f,L}(P)\neq\varnothing,\]
    then we can construct an $(f,L)$-transversal $S\subseteq V(L)$, which has Zariski closure contained in $V$, hence is not Zariski dense in $X$, contradicting $(f,K)$ being strongly dense orbit transversal.
    
    Now assume that there exists a finite extension $L/K$ such that for all proper subvariety $V\subsetneq X$, there exists a point $P\in X(L)$ such that
    \[V\cap\grano_{f,L}(P)=\varnothing.\]
    Let $S\subseteq X(L)$ be an $(f,L)$-transversal.
    Suppose for the sake of contradiction that its Zariski closure $\overline{S}\subsetneq X$.
    By assumption, there exists a point $P\in X(L)$ such that
    \[\overline{S}\cap\grano_{f,L}(P)=\varnothing.\]
    On the other hand, the set $S$ being an $(f,L)$-transversal implies that there exists a point $s\in S\subseteq \overline{S}(L)$ such that $s\in \grano_{f,L}(P)$, which is a contradiction.
\end{proof}

We next prove two lemmas concerning the $\ZZ$-module structure of $X(K)$ and use $\ZZ$-linear algebra to obtain the orbit avoidance result needed to apply Lemma \ref{equiv}.
Note that Lemma \ref{orbitavoid} is purely a statement in $\ZZ$-linear algebra, and it will be applied to $X(K)$, which is a finitely generated $\ZZ$-module.

\begin{lemma}\label{corank}
    Let $X$ be an abelian variety over a number field $K$.
    For any finite number $\rho$,
    there exists a finite extension $L/K$ such that for all proper abelian subvarieties $Y\subsetneq X$, we have that $Y(L)$ has corank at least~$\rho$ in $X(L)$.
    In other words, the $\ZZ$-rank of the quotient $X(L)/Y(L)$ is at least $\rho$.
\end{lemma}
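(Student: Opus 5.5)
The plan is to reduce the statement from all proper abelian subvarieties to finitely many quotients, and then raise the rank of each quotient by a field extension.

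\emph{Step 1: reduction to simple quotients.} First enlarge $K$ so that every abelian subvariety of $X_{\Qbar}$ is defined over $K$. This is possible because the abelian subvarieties correspond to certain idempotents (up to isogeny) in $\End(X_{\Qbar})\otimes\QQ$, and $\End(X_{\Qbar})$ is already defined over a finite extension. By Poincaré reducibility, $X$ is isogenous to $\prod_i B_i^{n_i}$ with the $B_i$ simple. Every proper abelian subvariety $Y\subsetneq X$ is then contained in the kernel of a nonzero homomorphism $\phi: X\to B_i$ for some $i$. Concretely, $X/Y$ is a nonzero abelian variety, so it admits a surjection onto some simple $B_i$, and we compose with the quotient map.

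\emph{Step 2: a rank lower bound through $\phi$.} For such $\phi$ we have $Y(L)\subseteq \ker\phi$. Hence
\[\operatorname{rank} X(L)/Y(L)\ \ge\ \operatorname{rank}\phi(X(L)).\]
There is an isogeny $\psi: B_i\to X$ with $\phi\circ\psi = [m]$ for some $m\neq 0$; for instance, take $\psi$ to be the inclusion of a factor $B_i$ on which $\phi$ is nonzero, composed with an isogeny. This gives
\[\phi(X(L))\supseteq \phi(\psi(B_i(L))) = mB_i(L),\]
so $\operatorname{rank}\phi(X(L))\ge \operatorname{rank} B_i(L)$. The whole problem therefore reduces to one claim: find a finite extension $L$ with $\operatorname{rank} B_i(L)\ge\rho$ for each of the finitely many $i$. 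One subtlety is that $\phi$ ranges over infinitely many maps. The bound above, however, depends only on the target $B_i$, and there are finitely many of those, so this causes no trouble.

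\emph{Step 3: making the ranks large.} This is the main obstacle: for each abelian variety $B$ over $K$ we need a finite extension $L$ with $\operatorname{rank} B(L)\ge\rho$.
\begin{itemize}
\item Choose a nonconstant map from a curve $C\subseteq B$ through the origin, or take a dominant rational map $\mathbb{P}^1$-family as follows. Pick points $P_1,\dots,P_\rho\in B(\Qbar)$ whose images in $B(\Qbar)\otimes\QQ$ are $\ZZ$-linearly independent, and let $L=K(P_1,\dots,P_\rho)$.
\item Such points exist because $B(\Qbar)\otimes\QQ$ is infinite-dimensional. To see this, if it had finite dimension $r$, all points would lie in the divisible hull of $r$ points plus torsion.
\item Those points all lie in $B(K')$ for a single number field $K'$. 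That is impossible, because $B(\Qbar)$ is not contained in $B(K')$ for any number field $K'$, since points of arbitrarily large degree exist, e.g.\ via points on a curve in $B$.
\item The extra care needed is the divisible-hull step. The divisible hull of a finitely generated group of $K'$-points consists of points whose division fields have bounded ramification away from finitely many primes, or more simply lie in $K'(B[\infty])$. So it suffices to exhibit points of $B(\Qbar)$ outside $K'(B[\infty])$.
\item Such points can be obtained by intersecting $B$, in a projective embedding, with hyperplanes over $K$ and applying Hilbert irreducibility, which gives points of degree not dividing anything forced by the torsion tower. Alternatively, one can use the standard fact (Frey–Jarden) that $B$ has infinite rank over $K'(B[\infty])$.
\end{itemize}

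Then $\operatorname{rank} B_i(L)\ge\rho$ for all $i$ simultaneously, by taking the compositum, and Steps 1–2 conclude the proof.
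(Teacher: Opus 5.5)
Your Steps 1--2 are correct and follow the paper's architecture: Poincar\'e reducibility, enlarging the field so that all endomorphisms (hence all abelian subvarieties) are defined, and reducing to $\operatorname{rank} B_i(L)\ge\rho$ for the finitely many simple factors. Your surjection $\phi\colon X\to B_i$ vanishing on $Y$, with $\psi$ satisfying $\phi\circ\psi=[m]$, is slightly cleaner than the paper's route. It bounds $\operatorname{rank} X(L)/Y(L)$ directly, whereas the paper passes through $(X/Y)(L)$ and tacitly needs $X(L)\to(X/Y)(L)$ to have finite-index image. The paper simply asserts the rank-growth fact for each $B_i$. The gap is your Step 3, which you yourself call the main obstacle: it does not show that $B(\Qbar)\otimes\QQ$ is infinite-dimensional.

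The decisive claim, that the divisible hull of a finitely generated $\Gamma\subseteq B(K')$ lies in $B(K'(B[\infty]))$, is false. If $Q\in\Gamma$ has infinite order and $nP=Q$, then $\sigma\mapsto\sigma P-P$ embeds $\operatorname{Gal}(K'(B[n],P)/K'(B[n]))$ into $B[n]$. Kummer theory for abelian varieties (Bashmakov, Ribet) shows that the division points of $Q$ generate an \emph{infinite} extension even of $K'(B_{\mathrm{tors}})$; the multiplicative analogue is $\sqrt[3]{2}\notin\QQ(\mu_\infty)$. So finding points outside $K'(B[\infty])$ does not put you outside the divisible hull. The ramification variant fails as well, since $K'(P)/K'$ may ramify above every prime dividing $n$, and $n$ is unbounded. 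The Hilbert-irreducibility sentence has no checkable content, and the Frey--Jarden alternative is an unreferenced appeal to a far deeper statement than you need.

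To close the gap, either cite the standard fact directly, as the paper effectively does, or prove it by counting. Suppose $\dim B(\Qbar)\otimes\QQ=r<\infty$, and choose $K'$ so that $B(K')\otimes\QQ\to B(\Qbar)\otimes\QQ$ is onto. Then $G_{K'}$ acts trivially on $B(\Qbar)\otimes\QQ$. Taking traces gives $d!\,P\in B(K')+B_{\mathrm{tors}}$ whenever $[K'(P):K']\le d$. Since $\hat h$ is positive definite on $B(K')\otimes\mathbb{R}$, and by Northcott only finitely many torsion points have degree $\le d^2$, there are only $O(H^{r/2})$ points of degree $\le d$ with $\hat h\le H$. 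On the other hand, take a finite morphism $\pi\colon B\to\mathbb{P}^g$ of degree $d$ over $K'$. Pulling back $\mathbb{P}^g(\QQ)$ gives points of degree $\le d$ with $\hat h\le c\,h(\pi(\cdot))+c'$, and $\gg e^{(g+1)H/c}$ of them satisfy $\hat h\le H$. This is a contradiction.
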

\begin{proof}
    By Poincar\'e reducibility theorem, the abelian variety $X$ is isogenous to a product $\prod_{i=1}^kB_i^{n_i}$, where $B_i$ is a simple abelian variety for all $i$.
    Take $L/K$ so that $\End_L(X)=\End_{\Qbar}(X)$ and the rank of $B_i(L)$ is at least $\rho$ for all $i$.
    This is possible because $\End_\Qbar(X)$ is a finitely generated $\ZZ$-module, and there exists a finite extension of $K$ where the generators are all defined.
    For each $B_i$ in the Poincar\'e decomposition, we can take an extension $L_i/K$ where the rank of $B_i(L)$ is at least $\rho$, then we take $L$ to be the compositum of all aforementioned extensions of $K$.

    Let $Y\subsetneq X$ be a proper abelian subvariety of $X$, then $Y$ is the image of an endomorphism in $\End_L(X)$.
    The quotient abelian variety $X/Y$ is isogenous to $\prod_{i=1}^kB_i^{m_i}$ for some $m_i$ not all 0, with an isogeny defined over $L$.
    Since $B_i(L)$ has rank at least $\rho$ for all $i$ and isogenies over $L$ preserve rank, $X(L)/Y(L)$ has rank at least $\rho$, and $Y(L)$ has corank at least $\rho$ in $X(L)$.
\end{proof}

\begin{lemma}\label{orbitavoid}
    Let $X$ be a free $\ZZ$-module of rank $R$ and $f:X\to X$ be an affine map such that $f(x)=T(x)+Q$, where $T$ is a linear map with $\det(T)\neq 0$.
    Let $V=\cup_{i=1}^N (a_i+Y_i)$ be a union of translates of submodules of $X$ such that the corank of $Y_i$ is at least~$2$ in $X$ for all~$i$.
    Then there exists a point $P\in X\setminus V$ such that $\grano_f(P)\cap V=\varnothing$.
\end{lemma}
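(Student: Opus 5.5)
The plan is to describe the set of ``bad'' starting points explicitly as a countable union of cosets of submodules of corank at least~$2$. I would then show by a counting argument in boxes that this union cannot exhaust $X\cong\ZZ^R$.

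\textbf{Step 1: describe the bad set.} Write $f^n(x)=T^nx+c_n$ with $c_n\in X$. A point $P$ is bad (its grand orbit meets $V$) if and only if $f^m(P)\in f^n(a_i+Y_i)$ for some $i$ and some $n,m\ge 0$. That is, $P$ lies in
\[B_{i,n,m}:=\{x\in X \mid T^mx+c_m\in T^na_i+c_n+T^nY_i\}.\]
Since $\det T\neq 0$, $T$ is injective and becomes an isomorphism on $X\otimes\QQ$. Hence $T^nY_i$ is a submodule of the same rank as $Y_i$. Moreover $B_{i,n,m}$ is either empty or a coset of the submodule $(T^m)^{-1}(T^nY_i)\cap X$, whose rank equals $\operatorname{rank}Y_i\le R-2$. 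The case $n=m=0$ shows $V\subseteq\bigcup B_{i,n,m}$, so it suffices to find
\[P\notin\bigcup_{i,n,m}B_{i,n,m}.\]
Note also that the linear part of $B_{i,n,m}\otimes\QQ$ is $T^{n-m}(Y_i\otimes\QQ)$, which depends only on $i$ and $d=n-m$.

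\textbf{Step 2: uniform counting.} Let $\Lambda\subseteq\ZZ^R$ be a subgroup of rank $r$. Then $\Lambda\otimes\QQ$ projects injectively onto some set of $r$ coordinates. Consequently any coset of $\Lambda$ meets the box $[-H,H]^R$ in at most $(2H+1)^r$ points. This bound is uniform in $\Lambda$. So each $B_{i,n,m}$ contains $O(H^{R-2})$ points of the box, while the box contains $\asymp H^R$ lattice points. It therefore suffices to show that the number of triples $(i,n,m)$ for which $B_{i,n,m}$ meets the box is $o(H^2)$.

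\textbf{Step 3 (main obstacle).} I expect the crux to be bounding the number of relevant pairs $(n,m)$. I would first try to exploit the dynamics directly. Applying $f$ to both sides, $B_{i,n,m}\subseteq B_{i,n+1,m+1}$, so only the difference $d=n-m$ and the ``depth'' really matter. I would then split $X\otimes\QQ$ via the eigenvalues of $T$ (equivalently, a height on $X\otimes\mathbb{R}$) into parts where $T$ expands, is of absolute value one, or contracts. On the expanding part, membership in the box forces $n$ and $m$ to be $O(\log H)$. On the absolute-value-one part, $T$ is (up to a finite power) unipotent, the orbits grow polynomially, and the grand orbit behaves like a line or translation family, as in the case $f(x)=x+q$. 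In that case I would instead project along the direction of the orbit: a corank-$2$ coset projects to a coset of corank at least~$1$, which can be avoided by choosing $P$ off finitely many hyperplanes modulo a large prime. If the counting in the mixed case becomes unwieldy, my fallback is a congruence argument. Choose a prime $\ell\nmid\det T$, so that $f$ induces a bijection on $X/\ell^kX$. Then pick $P$ whose image in $X/\ell^kX$ avoids the images of the (eventually periodic, hence finitely many up to the $T$-action) reductions of the $B_{i,n,m}$. The corank-$2$ hypothesis would guarantee that these reductions occupy a proportion $O(\ell^{-2k}\cdot\#\{\text{classes}\})$ that is less than~$1$ for suitable $k$.
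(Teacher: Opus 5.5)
Your proposal has a genuine gap: neither route actually bounds the bad set.

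\textbf{The box-counting route.} Step~1 is fine, but the reduction in Step~2 cannot be met as stated. By the nesting $B_{i,n,m}\subseteq B_{i,n+1,m+1}$ that you note yourself, as soon as one $B_{i,n,m}$ meets the box, infinitely many triples do; for $f=\id$, every $B_{i,n,m}$ equals $a_i+Y_i$. You would have to count distinct cosets of saturated lattices instead, and the $o(H^2)$ bound for those is exactly the unproved crux. The Step~3 sketch for that bound also rests on a false claim: the unit-modulus part of $T$ need not be unipotent up to a finite power. Kronecker's theorem needs \emph{all} conjugates on the unit circle. For example, the companion matrix of the Salem polynomial $x^4-x^3-x^2-x+1$ lies in $\GL_4(\ZZ)$ and acts on its (non-rational) unit-circle plane by an irrational rotation. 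So neither polynomial orbit growth nor ``projecting along the orbit direction'' is available there.

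\textbf{The congruence fallback.} This is the right idea and essentially the paper's route, but the sentence claiming that the bad residues occupy a proportion less than $1$ is the entire content of the lemma, and you leave it unproved. Because $\overline f$ permutes $X/\ell^kX$, the reductions of all the $B_{i,n,m}$ lie in $\bigcup_i\bigcup_{d\in\ZZ}\overline f^{\,d}(\overline{a_i+Y_i})$. This set has at most $N\cdot\operatorname{ord}(\overline f)\cdot\ell^{k(R-2)}$ elements, so you need $\operatorname{ord}(\overline f)<\ell^{2k}/N$. Finiteness of the number of classes gives no such bound, and bounding by the order of the affine group of $(\ZZ/\ell^k\ZZ)^R$ is far too weak.

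\textbf{How the paper fills it.} The paper homogenizes $f$ to the linear map $H=\left(\begin{smallmatrix}T&Q\\0&1\end{smallmatrix}\right)$ on $X\oplus\ZZ$, so that $\det H=\det T$. For a prime $p\nmid\det T$, one has $H^h=\id+pM$, where $h$ is the order of $H$ modulo $p$. The binomial expansion together with $\nu_p\bigl(\binom{p^m}{i}p^i\bigr)=m-\nu_p(i)+i>m$ gives $H^{hp^m}\equiv\id\pmod{p^m}$. Hence every grand $\overline f$-orbit in $X/p^mX$ is a cycle of length at most $hp^m$. It follows that $\bigl|\grano_{\overline f}(\overline V)\bigr|\le Nhp^m\cdot p^{m(R-2)}<p^{mR}$ once $p^m>Nh$, and any lift of a residue outside this set works. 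This is precisely where corank at least $2$ is used: one factor of $p^m$ pays for the orbit length and the other makes the proportion less than $1$. With this bound, your Step~1 analysis and the box counting become unnecessary.
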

\begin{proof}
    Consider the homogenization $F$ of $f$ on $X\oplus \ZZ e$ given by 
    \[F(x+te)=T(x)+tQ+te.\]
    Note that for any $v=x+e$ where $x\in X$, we have 
    \[F(x+e)=T(x)+Q+e=f(x)+e.\]
    Hence the restriction of $F$ on the submodule $X$ shifted by $e$ is $F|_{X+e}=f$.
    
    In matrix form, we find the matrix $H$ for the homogenization $F$ is $(R+1)\times(R+1)$ of the form
    \[H=\begin{bmatrix}
        T & Q\\
        0 & 1
    \end{bmatrix},\]
    which implies that $\det(H)=\det(T)\ne 0$.
    Since $\det(H)\neq 0$, we can take a prime $p\nmid \det(H)$, then 
    \[H\pmod{p}\in \GL_{R+1}(\FF_p)\] has finite order, say $h$, because $\GL_{R+1}(\FF_p)$ is finite.
    That is, there exists a matrix $M$ such that $H^h=\id+pM$.
    Take an integer $m>0$ such that $p^m>Nh$, we find that 
    \begin{equation}\label{pm}
        (H^h)^{p^m}=(\id+pM)^{p^m}=\sum_{i=0}^{p^m}{p^m\choose i}(pM)^{i}=\id +\sum_{i=1}^{p^m}{p^m\choose i}(pM)^{i}.
    \end{equation}
    By Lemma 2.3 of \cite{leethesis}, if $1\le i\le p^m$, we have
    \begin{equation}\label{coeff}
        \nu_p({p^m\choose i}p^i)=m-\nu_p(i)+i> m.
    \end{equation}
    Going forward, we will use $\overline{x}$ to denote $x\pmod{p^m}$ for any object $x$.
    Equations \eqref{pm} and \eqref{coeff} imply that reducing mod $p^m$, we have 
    \[(H^h)^{p^m}\equiv \id\pmod{p^m},\]
    and ${\overline{H}}$ has order at most $h\cdot p^m$, from which we deduce that ${\overline{f}}$ has order at most $h\cdot p^m$ as well.
    As a direct consequence, for any point $\overline{P}\in \overline{X}$, the grand orbit of $\overline{P}$ under $\overline{f}$ has size at most $h\cdot p^m$.

    Now we count the size of the grand orbits of points in $\overline{V}$:
    \begin{align*}
        \left|\grano_{\overline{f}}(\overline{V})\right|=&\left|\grano_{\overline{f}}\left(\bigcup_{i=1}^N (\overline{a_i+Y_i})\right)\right|\\
        \le& \sum_{i=1}^N\left|\grano_{\overline{f}}(\overline{a_i+Y_i})\right|\\
        \le& N\cdot (h\cdot p^m)(p^m)^{R-2}\\
        =&(Nh)(p^m)^{R-1}\\
        <&(p^m)^R\\
        =&\left|\overline{X}\right|.
    \end{align*}

    Therefore there exists a point $\overline{P}\in \overline{X}\setminus \overline{V}$ such that 
    \[\grano_{\overline{f}}(\overline{P})\cap \overline{V}=\varnothing,\] and hence a lift $P\in X\setminus V$ also satisfies $\grano_f(P)\cap V=\varnothing$.
\end{proof}
\begin{proof}[Proof of Theorem \ref{main}]
    Let $X/\Qbar$ be an abelian variety, and let $f:X\to X$ be a dominant morphism.
    By the rigidity theorem~\cite[Section~4, Corollary~1]{mumfordabvar}, we can decompose $f$ into a homomorphism followed by a translation:
    \[
        f(x)=\t(x)+y,
    \]
    where $\t$ is a homomorphism of abelian varieties and $y\in X(K)$ is a point.
    Consider the following commutative diagram of $\ZZ$-modules:
    \begin{equation}\label{free}
        \begin{tikzcd}
        X(K) \arrow[r, "f"] \arrow[d, "\pi"]      & X(K) \arrow[d, "\pi"]     \\
        X(K)/X(K)_{\mathrm{tors}} \arrow[r, "f'"] & X(K)/X(K)_{\mathrm{tors}}
        \end{tikzcd}
    \end{equation}
    where $f$ is of the form $f(x)=\t(x)+y$ with $\t$ a $\ZZ$-module homomorphism and $y\in X(K)$, $\pi$ is the quotient by $X(K)_{\mathrm{tors}}$ map, and $f'(x)=\overline{\t}(x)+\pi(y)$.
    Note that from the commutative diagram  \eqref{free}, we have 
    \[\pi(P)\notin \grano_{f',K}(\pi(Q))\implies P\notin \grano_{f,K}(Q).\]
    Therefore for any subvariety $V\subsetneq X$, to find a point with grand $(f,K)$-orbit avoiding $V(K)$, it suffices to find a point avoiding the image of $V(K)$ in $X(K)/X(K)_{\mathrm{tors}}$ and take a lift.
    
    We replace $K$ by a finite extension from Lemma \ref{corank} so that every proper abelian subvariety of $X$ has corank at least 2.
    For any proper subvariety $V\subsetneq X$, by Faltings's theorem \cite{faltings}, we find that 
    \[V\cap X(K)=\bigcup_{i=1}^N(a_i+Y_i(K)),\]
    where $a_i\in X(K)$ and $Y_i\subseteq X$ are abelian subvarieties.
    Since $V$ is a proper subvariety, the abelian subvariety $Y_i$ is proper for all $i$ as well.
    By our choice of $K$, the $\ZZ$-submodule $Y_i(K)$ has corank at least 2 for all $i$, and $f$ being dominant implies that $\t$ has nonzero determinant,
    so we may apply Lemma \ref{orbitavoid} to $X(K)$ and $V(K)$ to get that there always exists a point $P\in X(K)$ such that 
    \[V(K)\cap\grano_{f,K}(P)=\varnothing.\]
    Then we conclude the proof by observing that this is equivalent to strongly dense orbit transversality for $(f,K)$ by Lemma \ref{equiv}.
\end{proof}

\section{Varieties over uncountable fields}
Lastly, we prove strongly dense orbit transversality for dominant maps on irreducible varieties over uncountable fields.

\begin{proposition}\label{uncountablefield}
Let $K$ be an uncountable field, $X/K$ an irreducible variety such that $X(K)$ is Zariski dense in $X$, and let $f:X\to X$ be a dominant morphism.
Then every $(f,K)$-transversal for $X(K)$ is Zariski dense in $X$.
\end{proposition}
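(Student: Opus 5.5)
The plan is to argue by contradiction, in the spirit of the second half of Lemma~\ref{equiv}, but using cardinality instead of arithmetic. Suppose $S\subseteq X(K)$ is an $(f,K)$-transversal whose Zariski closure $V:=\overline{S}$ is a proper closed subset of $X$. Since every grand orbit meets $S\subseteq V(K)$, every $P\in X(K)$ satisfies $f^m(P)=f^n(Q)$ for some $Q\in V(K)$ and some $n,m\ge 0$. Hence
\[
X(K)\subseteq \bigcup_{n,m\ge 0} W_{n,m},\qquad W_{n,m}:=(f^m)^{-1}\bigl(\overline{f^n(V)}\bigr).
\]

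The first step is to check that each $W_{n,m}$ is a proper closed subset of $X$.
\begin{itemize}
\item Since $\dim \overline{f^n(V)}\le \dim V<\dim X$ and $X$ is irreducible, $Z_n:=\overline{f^n(V)}$ is a proper closed subset.
\item Because $f$ is dominant, so is $f^m$. Its image therefore meets the nonempty open set $X\setminus Z_n$, so $(f^m)^{-1}(X\setminus Z_n)\neq\varnothing$.
\item This shows $W_{n,m}\neq X$.
\end{itemize}
Thus $X(K)$ is contained in a countable union of proper closed subsets of $X$.

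The second step, which I expect to be the main obstacle, is to show that this is impossible: over an uncountable field, $X(K)$ is not covered by countably many proper closed subsets. My first attempt is a reduction to affine space.
\begin{itemize}
\item Choose a dense affine open $U\subseteq X$.
\item Since $K$ is infinite, Noether normalization gives a finite surjective map $\pi:U\to \mathbb{A}^d$, where $d=\dim X$.
\item The images $\pi(W_{n,m}\cap U)$ are proper closed subsets of $\mathbb{A}^d$, by a dimension count.
\item One then argues inductively on $d$, using that $K$ is uncountable: a countable union of proper closed subsets of $\mathbb{A}^d$ misses some $K$-line's worth of points, because a line meets each such subset in finitely many points unless it is contained in it.
\end{itemize}

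The delicate point is transferring this back to the given set $X(K)$. Zariski density of $X(K)$ by itself only guarantees that $\pi(U(K))$ is dense, and a dense set can be countable. For example, an abelian variety over an uncountable function field can have countable but dense Mordell--Weil group. Consequently I would need to extract from the hypotheses that $X(K)\setminus\bigcup W_{n,m}$ is nonempty. The most promising route is to show that $X(K)$ is uncountable and not contained in any countable union of proper closed sets, perhaps by restricting to rational curves or lines through $K$-points in $U$. If that fails, I would look for the precise extra input, such as $X(K)\cap U$ surjecting onto a nonempty open subset of $\mathbb{A}^d(K)$, that the author must be implicitly using. Once this covering statement is established, the contradiction shows $\overline{S}=X$, which proves the proposition.
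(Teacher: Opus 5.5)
Your first step is the same as the paper's argument. The sets $W_{n,m}=(f^m)^{-1}\bigl(\overline{f^n(V)}\bigr)$ are proper closed subsets of $X$, and $X(K)\subseteq\bigcup_{n,m}W_{n,m}$.

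The gap is your second step, which you leave unproved. It cannot be proved from the stated hypotheses, because the proposition is false as written, so your suspicion is justified. Here is a counterexample:
\begin{itemize}
\item Take $K=\QQ(t_\alpha)_{\alpha\in A}$ with $A$ uncountable.
\item Let $X=E$ be an elliptic curve over $\QQ$ of rank one (e.g.\ $y^2+y=x^3-x$), and let $P\in E(\QQ)$ have infinite order.
\item A point of $E(K)$ is defined over some $\QQ(t_1,\dots,t_n)$, so it is a rational map $\mathbb{A}^n_{\QQ}\dashrightarrow E$ defined over $\QQ$. Such a map is constant, so $E(K)=E(\QQ)$.
\item This group is infinite, hence Zariski dense in $E$.
\item For the automorphism $f(x)=x+P$, the grand orbits are the cosets of $\ZZ P$ in the rank-one group $E(\QQ)$, and there are only finitely many of them.
\end{itemize}
Every $(f,K)$-transversal is therefore finite and not Zariski dense. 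Your function-field example points in exactly this direction. Countability alone is not enough, but rank one together with a translation is.

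The paper's proof gets past this point only through an invalid step. It asserts that $\bigcup_{m,n}f^{-m}\bigl(\overline{f^n(Z)}\bigr)$ is closed, but a countable union of closed sets need not be Zariski closed. What actually follows is just your inclusion $X(K)\subseteq\bigcup W_{n,m}$, and in the example above that is no contradiction.

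A correct version needs a hypothesis guaranteeing that $X(K)$ is not contained in a countable union of proper closed subsets. Two cases where this holds:
\begin{itemize}
\item $K$ is uncountable and algebraically closed. Then $U(K)\to\mathbb{A}^d(K)$ is surjective, and your Noether normalization argument goes through.
\item $K$ is $\mathbb{R}$ or $\QQ_p$ and $X$ has a smooth $K$-point. This follows from the Baire category theorem.
\end{itemize}
Under such a hypothesis your two steps give a complete proof.
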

\begin{proof}
    Let $S\subseteq X(K)$ be an $(f,K)$-transversal for $X(K)$ and assume for the sake of contradiction that $Z:=\overline{S}\subsetneq X$, i.e., the Zariski closure of $S$ is a proper closed subset of $X$, and hence $\dim(Z)<\dim(X)$.
    Since morphisms cannot increase the dimension of a variety, we get that $\dim(f^n(Z))\le \dim (Z)<\dim(X)$ and $\overline{f^n(Z)}$ is a proper closed subset of $X$ for all $n\ge 0$.
    On the other hand, because $f$ is dominant, the preimage of any proper closed subset is still a proper closed subset.
    Therefore $f^{-m}(\overline{f^n(Z)})$ is a proper closed subset of $X$ for all $m,n\ge 0$.
    Now observe that by definition of $(f,K)$-transversals, we have 
    \[X(K)=\bigcup_{m\ge 0}\bigcup_{n\ge0} f^{-m}(f^n(S)).\]
    Taking the Zariski closure on both sides, we get
    \[X=\overline{X(K)}=\overline{\bigcup_{m\ge 0}\bigcup_{n\ge0} f^{-m}(f^n(S))}\subseteq\bigcup_{m\ge 0}\bigcup_{n\ge0} f^{-m}(\overline{f^n(Z)}),\]
    because the rightmost term is closed and contains $\bigcup_{m\ge 0}\bigcup_{n\ge0} f^{-m}(f^n(S))$.
    This implies that $X$ is a countable union of proper subvarieties of $X$, which is a contradiction, so $\overline{S}=X$.
\end{proof}


\printbibliography

@article {pastensilverman,
    AUTHOR = {Pasten, Hector and Silverman, Joseph H.},
     TITLE = {Propagation of {Z}ariski dense orbits},
   JOURNAL = {Rev. Mat. Iberoam.},
  FJOURNAL = {Revista Matem\'{a}tica Iberoamericana},
    VOLUME = {42},
      YEAR = {2026},
    NUMBER = {3},
     PAGES = {855--902},
}

@book {mumfordabvar,
    AUTHOR = {Mumford, David},
     TITLE = {Abelian varieties},
    SERIES = {Tata Institute of Fundamental Research Studies in Mathematics},
    VOLUME = {5},
      NOTE = {With appendices by C. P. Ramanujam and Yuri Manin,
              Corrected reprint of the second (1974) edition},
 PUBLISHER = {Tata Institute of Fundamental Research, Bombay; by Hindustan
              Book Agency, New Delhi},
      YEAR = {2008},
     PAGES = {xii+263},
      %ISBN = {978-81-85931-86-9; 81-85931-86-0},
   MRCLASS = {14Kxx},
  MRNUMBER = {2514037},
}

@incollection {faltings,
    AUTHOR = {Faltings, Gerd},
     TITLE = {The general case of {S}. {L}ang's conjecture},
 BOOKTITLE = {Barsotti {S}ymposium in {A}lgebraic {G}eometry ({A}bano
              {T}erme, 1991)},
    SERIES = {Perspect. Math.},
    VOLUME = {15},
     PAGES = {175--182},
 PUBLISHER = {Academic Press, San Diego, CA},
      YEAR = {1994},
}

@book {leethesis,
    AUTHOR = {Lee, Pui Hang},
     TITLE = {Local {F}ields, {I}terated {E}xtensions, and {J}ulia {S}ets},
      NOTE = {Thesis (Ph.D.)--University of Hawai'i at Manoa},
 PUBLISHER = {ProQuest LLC, Ann Arbor, MI},
      YEAR = {2025},
     PAGES = {45},
      %ISBN = {979-8293-87921-2},
   MRCLASS = {99-05},
  MRNUMBER = {4981560},
}

@misc{wdot,
      title={On Dense Orbit Transversality for Endomorphisms of Abelian Varieties}, 
      author={Kaiwen Lu},
      year={2026},
      eprint={2606.29057},
      archivePrefix={arXiv},
}

\end{document}